\newtheorem{theorem}{Theorem}
\newtheorem{lemma}{Lemma}
\newtheorem*{proposition*}{Proposition}
\newtheorem{definition}{Definition}
\newtheorem{remark}{Remark}
\renewcommand{\subsection}{\@startsection{subsection}{2}{0mm}{-\baselineskip}{-5pt}{\it \bf}}
\title{On primary decompositions of unital locally matrix algebras}
\author{Oksana Bezushchak and Bogdana Oliynyk }
\thanks{The second author was partially supported by the grant for scientific researchers of the ``Povir u sebe'' Ukranian Foundation.}
\begin{document}
	
	\maketitle
	
	\address{Faculty of Mechanics and Mathematics,
		Taras Shevchenko National University of Kyiv,
		Volodymyrska, 60, Kyiv 01033, Ukraine \\ Department of Mathematics, National University
		of Kyiv-Mohyla Academy, Skovorody St. 2, Kyiv,
		04070, Ukraine                     }
	
	\email{bezusch@univ.kiev.ua, oliynyk@ukma.edu.ua}

	\keywords{Keyword: locally matrix algebra, primary decomposition, Steinitz number, tensor product, Clifford algebra}               
	
	\subjclass{2010}{Mathematics Subject Classification: 03C05, 03C60, 11E88}
	


\begin{abstract}
		We construct a unital locally matrix algebra of uncountable dimension that
		\begin{enumerate}
            \item[(1)] does not admit a primary decomposition,
			\item[(2)] has an infinite  locally finite Steinitz number.
		\end{enumerate}
	
	It gives negative answers to questions from \cite{BezOl}  and  \cite{Kurochkin}. We also show that for an arbitrary infinite Steinitz number $s$ there exists a unital locally matrix algebra  $A$ having the Steinitz number $s$ and not isomorphic to a tensor product of finite dimensional matrix algebras.	
		
	\end{abstract}

	\section{Introduction}

 Let $F$ be a ground  field. In this  paper we consider associative unital $F$--algebras. Following A.~G.~Kurosh \cite{Kurosh} we say that an algebra $A$ with a unit $1_A$ is a locally matrix algebra if an arbitrary finite collection of elements $a_1,$ $\ldots,$ $a_s \in A$  lies in a subalgebra $B$,  $1_A\in B \subset A$,  that is  isomorphic to a matrix algebra $M_n(F),$ $n\geq 1.$

 In \cite{Koethe}  G.~Koethe proved that every countable dimensional unital locally matrix algebra admits a decomposition into an (infinite) tensor product of finite dimensional matrix algebras and
  admits a primary decomposition.

  A.~G.~Kurosh \cite{Kurosh} and V.~M.~Kurochkin \cite{Kurochkin} further studied  existence and  uniqueness of  such decompositions of unital locally matrix algebras of arbitrary dimensions. In particular  V.~M.~Kurochkin \cite{Kurochkin}  formulated the  question:

 \begin{center}\textit{does every locally matrix algebra have a  primary decomposition?}\end{center}

 J.~G.~Glimm \cite{Glimm} proved that a countable dimensional unital locally matrix algebra is uniquely determined by its  Steinitz  number.
In \cite{BezOl} we showed that this is no longer true for an algebra of the dimension $> \aleph_0. $ As follows from \cite{Sushch2}, \cite{Glimm}   for any Steinitz  number $s$ there exists a countable dimensional unital locally matrix algebra $A$ such that $\mathbf{st}(A)=s$. In \cite{BezOl} we proved that for any not locally finite Steinitz number $s$ there exists an uncountable dimensional unital locally matrix algebra $A$ with the Steinitz number $s$. So, in  \cite{BezOl} we asked:

 \begin{center}\textit{if  $\mathbf{st}(A)$ is locally finite, does it imply   that $\dim_F A \leq  \aleph_0 $?}
 \end{center}
	
In this paper we give negative answers to both of these questions. In  the section \ref{S3} we prove that
\begin{theorem}
	\label{teor1}
For an arbitrary infinite locally finite Steinitz  number $s$ there exists  a unital locally matrix algebra of uncountable dimension with  Steinitz  number $s$.
\end{theorem}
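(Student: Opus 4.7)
Fix an infinite locally finite Steinitz number $s$, so $s=\prod_p p^{n_p}$ with each $n_p$ finite and $n_p>0$ for infinitely many primes $p$. The canonical countable-dimensional unital locally matrix algebra of Steinitz number $s$ is the primary tensor product $A_0=\bigotimes_p M_{p^{n_p}}(F)$, which by Glimm's theorem (together with Koethe's primary decomposition) is the unique such algebra up to isomorphism. Hence it suffices to exhibit any unital locally matrix $F$-algebra $A$ with $\mathbf{st}(A)=s$ not isomorphic to $A_0$: such an $A$ is then automatically of dimension $>\aleph_0$.

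My proposal is to realize $A$ as the directed union of a family of matrix subalgebras $\{M_\lambda\}_{\lambda\in\Lambda}$ inside a suitable common ambient unital $F$-algebra. Here $\Lambda$ will be chosen as a directed poset of cardinality $\aleph_1$ with no countable cofinal subset, and to each $\lambda$ one associates a divisor $d(\lambda)\mid s$ together with a unital matrix subalgebra $M_\lambda\cong M_{d(\lambda)}(F)$; the inclusions $M_\lambda\subseteq M_\mu$ for $\lambda\le\mu$ must be unital and compatible. Then $A:=\bigcup_\lambda M_\lambda$ is a unital $F$-algebra in which every finite subset lies inside some $M_\lambda$; thus $A$ is locally matrix and $\mathbf{st}(A)=\sup_\lambda d(\lambda)$, which can be made equal to $s$ by arranging the divisors $d(\lambda)$ to be cofinal in the set of all finite divisors of $s$.

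The crucial point for uncountable dimension is the following. Although the set $\{d(\lambda):\lambda\in\Lambda\}$ of divisor values is a countable subset of $\mathbb{N}$, uncountably many indices $\lambda$ may share the same value $d(\lambda)=d$. Provided that the corresponding $M_\lambda$ are realized as honestly distinct subalgebras of the ambient algebra, and that the directed poset $\Lambda$ has no countable cofinal subset, the union $\bigcup_\lambda M_\lambda$ must contain uncountably many linearly independent elements, so $\dim_F A\ge\aleph_1$. The verification that $A$ is locally matrix, unital, and of the required Steinitz number is then routine from the construction.

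\textbf{The main obstacle} is the construction of the ambient unital $F$-algebra, inside which the uncountable family $\{M_\lambda\}$ is to be realized. The transition embeddings and the ambient structure must be chosen so as to simultaneously (i) keep every inclusion $M_\lambda\subseteq M_\mu$ unital, (ii) prevent the appearance of any prime factor outside those already dividing $s$, so that $\mathbf{st}(A)=s$ exactly rather than some larger Steinitz number, and (iii) keep the uncountably many copies of matrix algebras of each fixed dimension $d\mid s$ genuinely distinct in the union, so that the construction does not collapse back to the countable $A_0$. A natural realization is inside the endomorphism algebra of a large $F$-vector space carrying a coherent $\aleph_1$-indexed family of compatible module decompositions, or alternatively inside a twisted group-algebra-type construction; verifying that the resulting $F$-subalgebra is locally matrix with the prescribed Steinitz number is the technical heart of the proof.
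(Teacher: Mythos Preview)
Your overall strategy---build an uncountable directed system of unital matrix subalgebras whose ranks divide $s$ and take the union---is exactly the shape of the paper's argument, but you have left the decisive step unfinished. The ``main obstacle'' you name (realizing the system inside an ambient algebra so that the Steinitz number stays equal to $s$ while the union is genuinely uncountable) is the whole content of the theorem, and neither of your suggestions (endomorphism algebra of a large space, a twisted group-algebra construction) is carried out or clearly workable for a \emph{locally finite} $s$, where no prime power is allowed to grow. The paper resolves this with a specific input you are missing: Kurosh's theorem that every countable-dimensional unital locally matrix algebra $A$ contains a proper unital subalgebra $B\cong A$. Equivalently, any such $A$ embeds properly and unitally into an isomorphic copy of itself, so one can iterate along an uncountable ordinal $\gamma$: set $A_1=A$; at successors, properly embed $A_{\alpha-1}$ into $A_\alpha\cong A_{\alpha-1}$; at limits, take unions. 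Arguing by contradiction (assume every algebra of Steinitz number $s$ is countable) guarantees Kurosh's hypothesis holds at every successor stage, and the resulting $A_\gamma$ is an $\omega_1$-chain with strictly increasing terms, hence of uncountable dimension---the contradiction.

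A secondary issue: your claimed implication ``the $M_\lambda$ are honestly distinct and $\Lambda$ has no countable cofinal subset $\Rightarrow\dim_F A\ge\aleph_1$'' is not justified as stated. If $\Lambda$ is an abstract poset and $\lambda\mapsto M_\lambda$ is only order-preserving (not order-reflecting), uncountably many distinct finite-dimensional subalgebras can still live inside a countable-dimensional algebra (over an uncountable field, $M_4(F)$ already contains uncountably many distinct unital copies of $M_2(F)$). What actually forces uncountable dimension in the paper's proof is the \emph{chain} structure with \emph{proper} inclusions at every successor step; that is precisely what Kurosh's theorem provides and what your outline lacks.
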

Moreover, in this case the algebra $A$ has no  primary decomposition.

\begin{theorem}
	\label{teor2}
There exists  a unital locally matrix algebra of uncountable dimension that has no  primary decomposition.
\end{theorem}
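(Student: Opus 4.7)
The plan is to derive Theorem~\ref{teor2} as an immediate consequence of Theorem~\ref{teor1}, together with a short dimension-counting argument. The key point is that once the Steinitz number is locally finite, every primary component is forced to be a finite dimensional matrix algebra, and a countable tensor product of finite dimensional algebras has dimension at most $\aleph_0$.

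First I would pick an infinite locally finite Steinitz number $s$; the simplest choice is the squarefree product $s = \prod_{p} p$ over all primes. By Theorem~\ref{teor1} there exists a unital locally matrix algebra $A$ with $\mathbf{st}(A) = s$ and $\dim_F A > \aleph_0$. The claim is that this particular $A$ already has no primary decomposition, which is precisely Theorem~\ref{teor2}.

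Suppose for contradiction that $A \cong \bigotimes_{p} A_p$ is a primary decomposition, the tensor product ranging over the primes appearing in $s$, and each $A_p$ being a $p$-primary unital locally matrix algebra. Then $\mathbf{st}(A_p) = p^{k_p}$, where $k_p$ is the exponent of $p$ in $s$. Local finiteness of $s$ means $k_p \in \mathbb{N}$ for every $p$, so $\mathbf{st}(A_p)$ is finite, and hence $A_p \cong M_{p^{k_p}}(F)$ is a finite dimensional matrix algebra. The infinite tensor product $\bigotimes_{p} A_p$ is, by its standard construction, the direct limit of the finite partial tensor products $\bigotimes_{p \in T} A_p$ indexed by finite sets $T$ of primes; each such finite tensor product is a finite dimensional matrix algebra, and the set of primes is countable, so the direct limit has dimension at most $\aleph_0$. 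This contradicts $\dim_F A > \aleph_0$ and completes the argument.

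The substantive obstacle is not in this reduction but in Theorem~\ref{teor1} itself, namely the construction of an uncountable dimensional unital locally matrix algebra whose Steinitz number is a prescribed infinite locally finite number; the deduction of Theorem~\ref{teor2} from it is essentially free, and the only mildly delicate point is checking that the algebraic (as opposed to any completed) infinite tensor product of unital algebras is indeed the direct limit of the finite subtensor products, a routine verification from the universal property used implicitly in Koethe's decomposition theorem.
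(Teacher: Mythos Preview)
Your proposal is correct and follows essentially the same route as the paper: pick an infinite locally finite Steinitz number $s$, invoke Theorem~\ref{teor1} to obtain an uncountable dimensional $A$ with $\mathbf{st}(A)=s$, and observe that any primary decomposition would force each $A_p \cong M_{p^{r_p}}(F)$ with $r_p < \infty$, making $A$ at most countable dimensional. The paper's own proof is a terser version of exactly this argument.
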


A.G.~Kurosh in \cite{Kurosh} also constructed an example of a unital locally matrix algebra of uncountable dimension that does not admit  a decomposition into an infinite tensor product of finite dimensional  matrix algebras. Another example of this kind (a Clifford algebra) is constructed in \cite{BezOl}. Both examples in \cite{Kurosh} and \cite{BezOl} have Steinitz number $2^\infty$. In  the section \ref{S4} for an arbitrary odd number $l$ we construct a unital locally matrix algebra of Steinitz number $l^{\infty}$ that does not admit a  decomposition into a tensor product of finite dimensional matrix algebras. Let $\mathbb{R}$ be the set of all real numbers with natural order and let $Clg(l,\mathbb{R})$ be the generalized Clifford algebra (see section \ref{S4}, a more general construction of this kind appeared in \cite{Ramakr}).
	\begin{theorem}
	\label{theor3}
If $l$ is an odd number then  $Clg(l,\mathbb{R})$ is not isomorphic to a tensor product of finite dimensional  matrix algebras.	
\end{theorem}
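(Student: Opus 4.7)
The plan is to argue by contradiction: suppose $Clg(l,\mathbb{R}) \cong A := \bigotimes_{i \in I} M_{n_i}(F)$. Since the Steinitz number equals $l^{\infty}$, every $n_i$ is a power of $l$, and by regrouping one may assume $n_i = l$ for all $i$, writing $A_J := \bigotimes_{j \in J} M_l(F)$ for finite $J \subset I$. Each generator $e_r$ then lies in some $A_{J(r)}$ with $J(r) \subset I$ finite.

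The central combinatorial step is to apply the $\Delta$-system lemma to the uncountable family $\{J(r)\}_{r\in\mathbb{R}}$ of finite subsets of $I$, obtaining an uncountable $R \subset \mathbb{R}$ and a finite root $J_0 \subset I$ with $J(r) \cap J(s) = J_0$ for all $r \neq s$ in $R$; setting $J'(r) := J(r) \setminus J_0$, the family $\{J'(r) : r \in R\}$ consists of pairwise disjoint finite subsets of $I$. Each $e_r$ then lies in $A_{J_0} \otimes A_{J'(r)}$, and for every $r \in R$ one chooses a linear functional $\phi_r : A_{J'(r)} \to F$ such that $\bar e_r := (\mathrm{id} \otimes \phi_r)(e_r) \neq 0$ in $A_{J_0}$ (possible since $e_r \neq 0$). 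For $r < s$ in $R$, applying $\mathrm{id} \otimes \phi_r \otimes \phi_s$ to the identity $e_r e_s = \omega e_s e_r$ inside $A_{J_0} \otimes A_{J'(r)} \otimes A_{J'(s)}$, and exploiting the commutation of the two disjoint outer tensor factors, produces $\bar e_r \bar e_s = \omega \bar e_s \bar e_r$ in the finite-dimensional matrix algebra $A_{J_0}$.

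This constructs an uncountable family $\{\bar e_r\}_{r \in R}$ of non-zero, pairwise $\omega$-commuting elements inside the single finite-dimensional central simple algebra $A_{J_0} \cong M_{l^{|J_0|}}(F)$, which must fail. The intended mechanism is that for non-proportional $z_1, z_2$ satisfying $z_1 z_2 = \omega z_2 z_1$ in $M_N(F)$, $z_2$ is confined to the proper $\omega$-eigenspace of the adjoint action of $z_1$; iterating this confinement over finitely many representatives collapses the ambient algebra and leaves no room for an uncountable family. The principal obstacle I anticipate is the degenerate case in which many $\bar e_r$ are nilpotent with $\bar e_r^2 = 0$, since over an uncountable base field this a priori permits uncountable proportional families and the eigenspace argument breaks down; resolving it should require a refined choice of the functionals $\phi_r$ which uses the semisimplicity of $e_r$ forced by $e_r^l = 1$ to arrange that each $\bar e_r$ is non-nilpotent in $A_{J_0}$.
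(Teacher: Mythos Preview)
Your approach via the $\Delta$-system lemma is genuinely different from the paper's, and the gap you yourself flag is real and not closed by the fix you sketch. The difficulty is twofold. First, it is not clear that the functionals $\phi_r$ can be chosen so that $\bar e_r$ is non-nilpotent: the relation $e_r^l=1$ constrains $e_r$ globally in $A_{J_0}\otimes A_{J'(r)}$ but says little about any individual partial contraction. Second, even granting invertibility of every $\bar e_r$, the assertion that $M_N(F)$ cannot host an uncountable ordered family with $\bar e_r\bar e_s=\omega\,\bar e_s\bar e_r$ for $r<s$ is not established by the eigenspace-confinement heuristic; for instance the trace form gives $\mathrm{tr}(\bar e_r\bar e_s)=0$ for $r\neq s$, but conjugating by any $\bar e_s$ with $s>r$ also forces $\mathrm{tr}(\bar e_r^{\,2})=0$ (using $\omega^2\neq 1$ for $l$ odd), so orthogonality does not yield linear independence. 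Turning your outline into a proof would require substantially more than what is written.

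The paper's argument is much shorter and avoids all of this. One first shows (Lemma~1) that the \emph{countable} set $X_{\mathbb Q}=\{x_q:q\in\mathbb Q\}$ already has centralizer $F\cdot 1$ in $Clg(l,\mathbb R)$: if a basis monomial $x_{i_1}^{k_1}\cdots x_{i_r}^{k_r}$ centralizes $X_{\mathbb Q}$, conjugating by a rational-indexed $x_j$ with $j<i_1$ and then with $i_1<j<i_2$ gives $k_1+\cdots+k_r\equiv 0$ and $-k_1+k_2+\cdots+k_r\equiv 0\pmod l$, whence $2k_1\equiv 0\pmod l$, and oddness of $l$ gives the contradiction. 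Then (Lemma~2) one simply observes that in any decomposition $\bigotimes_{i\in I}A_i$ with $|I|>\aleph_0$, a countable subset already lies in $\bigotimes_{j\in J}A_j$ for some countable $J\subset I$, and every factor $A_i$ with $i\notin J$ centralizes it --- contradicting the triviality of the centralizer of $X_{\mathbb Q}$. No $\Delta$-system, no projections, and no finiteness argument inside a fixed matrix algebra are needed.
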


Finally we obtain
\begin{theorem}
	\label{theor4}
For an arbitrary infinite Steinitz number $s$ there exists  a unital locally matrix algebra $A$ such that $\mathbf{st}(A)=s$ and $A$ does not admit a  decomposition into a tensor product of finite dimensional matrix algebras.
\end{theorem}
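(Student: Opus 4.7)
The plan is to split the argument according to whether the infinite Steinitz number $s$ is locally finite, and in each case to exploit the elementary observation that every tensor product of finite dimensional matrix algebras admits a primary decomposition. Indeed, factoring $M_n(F)\cong\bigotimes_{p\mid n} M_{p^{v_p(n)}}(F)$ inside each tensor factor and then regrouping by prime rewrites any $\bigotimes_{\alpha} M_{n_\alpha}(F)$ as a tensor product $\bigotimes_p D_p$ with $\mathbf{st}(D_p)$ a power of $p$. Consequently, an algebra without a primary decomposition cannot be isomorphic to any such tensor product.

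If $s$ is locally finite and infinite, Theorem~\ref{teor1} together with the remark immediately following it produces an uncountable dimensional unital locally matrix algebra $A$ with $\mathbf{st}(A)=s$ having no primary decomposition, and the observation above finishes this case.

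If $s$ is not locally finite, fix a prime $p$ with $p^\infty\mid s$ and write $s=p^\infty\cdot t$. By Glimm's theorem, choose a countable dimensional unital locally matrix algebra $B$ with $\mathbf{st}(B)=t$. Choose also an uncountable dimensional $C$ with $\mathbf{st}(C)=p^\infty$ that is not a tensor product of finite dimensional matrix algebras: for odd $p$ take $C=Clg(p,\mathbb{R})$ by Theorem~\ref{theor3}, and for $p=2$ take the Kurosh example from \cite{Kurosh} or the Clifford algebra from \cite{BezOl}. Set $A=B\otimes C$, so that $\mathbf{st}(A)=s$ and $\dim_F A$ is uncountable. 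If $A\cong\bigotimes_\alpha M_{n_\alpha}(F)$, then the opening observation supplies a primary decomposition $A\cong\bigotimes_q D_q$ in which each $D_q$ is itself a tensor product of matrix algebras and $\mathbf{st}(D_p)=p^\infty$. The strategy is to identify $D_p$ with $C$, contradicting the choice of $C$.

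The principal obstacle is precisely this last identification, because Theorem~\ref{teor2} shows that primary decompositions need not be unique in general, so uniqueness cannot simply be invoked. I would address it by a centralizer-theoretic characterization of the $p$-primary part: since $\mathbf{st}(B)$ and $\mathbf{st}(C)$ are coprime and both algebras are central, the subalgebras $B\otimes 1$ and $1\otimes C$ ought to be mutual centralizers inside $A$, and likewise for $\bigotimes_{q\neq p} D_q$ and $D_p$. A double centralizer comparison of the two factorizations of $A$ should then force $D_p\cong C$, yielding the required contradiction with Theorem~\ref{theor3} (respectively with the example of \cite{Kurosh} when $p=2$).
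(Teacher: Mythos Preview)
Your treatment of the locally finite case is correct and is exactly the paper's argument: a tensor product of finite dimensional matrix algebras can be regrouped into a primary decomposition, so the algebra of Theorem~\ref{teor2} cannot be such a product.

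The non-locally-finite case, however, has a real gap. You set up $A=B\otimes C$ and, assuming $A\cong\bigotimes_\alpha M_{n_\alpha}(F)$, extract a primary factor $D_p$ which is itself a tensor product of matrix algebras; you then want to conclude $D_p\cong C$. Your proposed ``double centralizer comparison'' does not do this. Knowing that $(B\otimes 1,\,1\otimes C)$ and $(\bigotimes_{q\ne p}D_q,\,D_p)$ are each pairs of mutual centralizers in $A$ says nothing about how the two pairs sit relative to one another inside $A$, and there is no general uniqueness theorem for primary factors of uncountable locally matrix algebras to appeal to (the paper is precisely exhibiting pathologies in this range). So the identification $D_p\cong C$ is unjustified, and the argument does not close.

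The paper bypasses this entirely. It never compares two factorizations. Instead it uses that the chosen $C=A(p)$ (the Kurosh example for $p=2$, or $Clg(p,\mathbb R)$ for odd $p$) contains a \emph{countable} subset $W$ whose centralizer in $C$ is $F\cdot 1$ (Lemma~\ref{lemma1}). Since $B=A'$ is countable dimensional, $W\otimes A'$ is a countable dimensional subspace of $A=A(p)\otimes A'$, and a direct commutator computation shows its centralizer in $A$ is $F\cdot 1_A$. Lemma~\ref{lemma2} then finishes: any uncountable tensor product $\bigotimes_{i\in I}A_i$ of finite dimensional algebras has the property that every countable subset lies in $\bigotimes_{j\in J}A_j$ for some countable $J\subset I$, and is therefore centralized by every $A_i$ with $i\notin J$. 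Thus $A$ cannot be such a product. You should replace your centralizer comparison of factorizations with this direct centralizer-of-a-countable-set argument.
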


\section{Steinitz numbers}


Let $ \mathbb{P} $ be the set of all primes and $ \mathbb{N} $ be the set of all positive integers. A
  {\it Steinitz } or {\it supernatural} number (see \cite{ST})  is an infinite formal
product of the form
\begin{equation}\label{st1}
 \prod_{p\in \mathbb{P}} p^{r_p}  , 
 \end{equation}
where   $ r_p\in  \mathbb{N} \cup \{0,\infty\}$ for all $p\in \mathbb{P}.$
Two Steinitz numbers
$$ \prod_{p\in \mathbb{P}} p^{r_p} \ \text{ and } \  \prod_{p\in \mathbb{P}} p^{k_p}  $$
 can be multiplied with
$$ \prod_{p\in \mathbb{P}} p^{r_p} \cdot  \prod_{p\in \mathbb{P}} p^{k_p}= \prod_{p\in \mathbb{P}} p^{r_p+k_p}  \ ,  $$
where we assume that $k_p \in  \mathbb{N} \cup \{0,\infty\}$ and   $t+\infty=\infty+t=\infty+\infty=\infty$ for all positive integers $t$.

Denote by $ \mathbb{SN} $ the set of all Steinitz numbers. Note, that the set of all positive integers $ \mathbb{N}$ is subset of $\mathbb{SN}$.   A  Steinitz number \eqref{st1} is called {\it locally finite } if  $r_p \neq \infty $ for any $p \in  \mathbb{P} $. The numbers  $ \mathbb{SN}\setminus   \mathbb{N}  $  are called {\it infinite}  Steinitz numbers.

Let $A$ be a  locally matrix algebra with a unit $1_A$ over a field $F$ and let $D(A)$ be  the  set of all positive integers $n$ such that  there is a subalgebra  $A'$, $1_A \in A'\subseteq A$, $A' \cong  M_n(F)$.
Then the  least common multiple of the set $D(A)$ is called the {\it Steinitz  number }  of the algebra $A$ and denoted as $\mathbf{st}(A)$.

\section{Primary decompositions of unital locally matrix algebras}
\label{S3}

\begin{definition}
	\label{primary}
A unital locally matrix algebra $A$ over a field $F$ is called primary if $\mathbf{st}(A)=p^s,$ where $p$ is a prime number and $ s \in  \mathbb{N} $ or $s =\infty $.
\end{definition}

Recall, that if $A$ and $B$ are unital locally matrix algebras then the algebra $A \otimes_F B$ is a unital  locally matrix and
$ \mathbf{st}(A \otimes_F B)=\mathbf{st}(A) \cdot \mathbf{st}(B)$ (see \cite{BezOl}).

\begin{definition}
	\label{primary decom}
We say that the decomposition $$A = \bigotimes_{p\in \mathbb{P}} A_p$$ of a  unital locally matrix algebra $A$ over $F$ is a  primary decomposition if    each algebra $A_p$ is primary for all $p\in \mathbb{P}.$
\end{definition}

\begin{proof}[Proof of Theorem \ref{teor1}] The crucial  role in the proof will be played by the theorem of A.~G.~Kurosh (\cite{Kurosh}, Theorem 10) which is reformulated as follows:
\vskip 5mm	
	
 \textit{let $A$ be a countable dimensional  locally matrix algebra with a unit $1_A$. Then $A$
contains a proper subalgebra $1_A \in B \subset A$ such that $A\cong B$.}
\vskip 5mm

 Now suppose that $A$ is a locally matrix algebra such that $\mathbf{st}(A)=s$ and all unital locally matrix algebras of Steinitz number $s$  are no more than countable dimensional.

 Let $\gamma$  be an uncountable ordinal. For all ordinals $\alpha \leq \gamma$  we will construct an unital locally matrix algebra  $A_{\alpha}$  such that
  \begin{enumerate}
   \item[(1)] $\mathbf{st}(A_{\alpha})=s,$
   \item[(2)] if $\alpha < \beta \leq \gamma$ then  $A_{\alpha}$ is properly contained in $A_{\beta}$.
    \end{enumerate}
 Let  $A_1 = A.$ If $\alpha$ is a limit  ordinal then we let $$A_{\alpha}= \bigcup_{\mu < \alpha} A_{\mu},$$ where $\mathbf{st}(A_{\mu})=s,$ so  $\mathbf{st}(A_{\alpha})=s.$

  If $\alpha$ is a nonlimit  ordinal then  $\alpha-1$ exists and an algebra $A_{\alpha-1}$  has been constructed with $\mathbf{st}(A_{\alpha-1})=s.$ By an assumption   $\dim_F A_{\alpha-1}\leq \aleph_0.$ Hence  by  Kurosh's theorem there exists an unital locally matrix algebra $A'$ such that   $A_{\alpha-1}$ is properly contained in $A'$,  the unit of  $A_{\alpha-1}$ is the unit of $A'$ and $\mathbf{st}(A')=s.$  Let $A_{\alpha}=A'.$

  We have already arrived at  contradiction since the algebra $A_{\gamma}$ can not be countable dimensional.
\end{proof}

\begin{proof}[Proof of Theorem \ref{teor2}] Let $s$ be an infinite locally finite Steinitz number. We have shown (Theorem \ref{teor1}) the existence of a unital locally matrix algebra  $A$ such that $\mathbf{st}(A)=s$ and $\dim_F A > \aleph_0.$  If the algebra $A$ admits  a primary decomposition then $$A\cong \bigotimes_{p\in \mathbb{P}} A_p, \ \ \mathbf{st}(A_p)=p^{r_p} \ \text{ and } r_p< \infty \ \text{ for all } p\in\mathbb{P}. $$ Hence $A_p \cong M_{p^{r_p}}(F)$ and therefore $\dim_F A \leq \aleph_0.$ This concludes the proof of  Theorem \ref{teor2}.
\end{proof}

\section{Decompositions into products of matrix algebras}
\label{S4}

Let's recall the definition of a generalized Clifford algebra introduced in \cite{BezOl}.

Let $l>1$ be an integer. If $\text{char }F>0$ then we assume that $l$ is coprime with  $\text{char }F$. Let $\xi \in F$ be an $l$-th primitive root of $1$. Let $I$ be an ordered set. The generalized Clifford algebra $Clg(l,I)$ is presented by generators $x_i$, $i \in I$, and relations: $$ x_i^l =1\, , \ \ x_i^{-1}x_j x_i =\xi x_j \ \text{ for }  i<j \, , $$ $$\ x_i^{-1}x_j x_i =\xi^{-1} x_j \ \text{ for }  i>j \, , \ i,j \in I \, .$$
	
	In \cite{BezOl} we showed that $Clg(l,I)$ is a unital locally matrix algebra of Steinitz number $l^\infty$ and that ordered monomials  $$x_{i_1}^{k_1}\cdots x_{i_r}^{k_r}, \ i_1< \ldots < i_r, \ 1 \leq k_j\leq l-1 , \ 1\leq j \leq r,$$ form a basis of $Clg(l,I)$.

Let $X_{\mathbb{Q}}$ be the set of generators indexed by rational numbers.

\begin{lemma}
	\label{lemma1}
	Let $l$ be odd. Then the centralizer of $X_{\mathbb{Q}}$ in  $Clg(l,\mathbb{R})$ is $F \cdot 1$.
\end{lemma}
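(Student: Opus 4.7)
The plan is to expand an arbitrary element $a$ of the centralizer in the monomial basis for $Clg(l,\mathbb{R})$ and extract enough linear constraints from commutation with the $x_q$, $q\in\mathbb{Q}$, to force $a$ to be a scalar.

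First I would observe that conjugation by any generator acts diagonally on the monomial basis: iterating the defining relations, for any $q \in \mathbb{R}$ and any ordered monomial $m = x_{i_1}^{k_1}\cdots x_{i_r}^{k_r}$ with $i_1 < \cdots < i_r$, one obtains
\[
x_q^{-1}\, m\, x_q \;=\; \xi^{E(q,m)}\, m, \qquad E(q,m) \;=\; \sum_{i_j > q} k_j \;-\; \sum_{i_j < q} k_j,
\]
with the convention that an index $i_j = q$ contributes $0$ (since $x_q$ commutes with $x_q^{k_j}$). Writing $a = \sum_{\alpha} c_\alpha m_\alpha$ in the basis, the condition $x_q a = a x_q$ together with the linear independence of the $m_\alpha$ forces $\xi^{E(q,m_\alpha)} = 1$, i.e.\ $E(q, m_\alpha) \equiv 0 \pmod{l}$, for every $\alpha$ with $c_\alpha \neq 0$ and every $q \in \mathbb{Q}$.

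Next, I would argue by contradiction that no nontrivial monomial $m_\alpha$ can occur. Suppose one does, and pick an index $i_j$ in its support. The support is finite, so by density of $\mathbb{Q}$ in $\mathbb{R}$ I can choose rationals $q_1, q_2$ with $q_1 < i_j < q_2$ and with no other index of $m_\alpha$ lying in $[q_1,q_2]$. Passing from $q_1$ to $q_2$ flips only the sign contribution of $i_j$ in $E(\cdot, m_\alpha)$, yielding
\[
E(q_1, m_\alpha) \;-\; E(q_2, m_\alpha) \;=\; 2 k_j.
\]
Both values are divisible by $l$, so $2 k_j \equiv 0 \pmod{l}$; since $l$ is odd, $\gcd(2,l) = 1$, hence $k_j \equiv 0 \pmod{l}$, contradicting $1 \leq k_j \leq l-1$. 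Therefore only the identity monomial can have a nonzero coefficient in $a$, and $a \in F\cdot 1$.

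The calculations themselves are routine; the only delicate point is that $q_1$ and $q_2$ must be rational \emph{and} avoid the (finite) support of $m_\alpha$, so that $i_j$ is genuinely the single sign-flipping term. This is immediate from finiteness plus density. The two hypotheses each enter at exactly one step: oddness of $l$ is what permits cancelling the $2$ in $2k_j \equiv 0 \pmod{l}$, while density of $\mathbb{Q}$ in $\mathbb{R}$ is what isolates any real index $i_j$ between two rationals; without either, the argument breaks precisely at that point.
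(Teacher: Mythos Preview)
Your proof is correct and follows essentially the same route as the paper's: reduce the problem to single monomials, conjugate by rational-indexed generators on either side of a chosen support index to obtain two congruences, subtract to get $2k_j \equiv 0 \pmod{l}$, and invoke oddness of $l$. The only minor difference is in the reduction step: the paper appeals to auxiliary automorphisms $\varphi_i$ and an invariance lemma from \cite{BezOl} to conclude that the centralizer is spanned by the monomials it contains, whereas you obtain the per-monomial constraint more directly by noting that conjugation by each $x_q$ already acts diagonally on the monomial basis, so linear independence decouples the commutation condition.
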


\begin{remark}
	The assertion of the Lemma is not true for $l=2$, that is for the case of ordinary Clifford algebras. Indeed, if $\alpha$, $\beta$ are two distinct irrational numbers then $x_{\alpha}x_{\beta}$ lies in the centralizer of $X_{\mathbb{Q}}$.
\end{remark}

\begin{proof}[Proof of Lemma \ref{lemma1}]
In \cite{BezOl} we showed that
\begin{enumerate}
	\item[(1)]
	for any $i \in I$ the mapping $\varphi_i (x_k)=\xi ^{\delta_{ik}} x_k,$ $k \in I,$ extends to an automorphism $\varphi_i$ of $Clg(l,I)$,
    \item[(2)]  any subspace $V$ of $Clg(l,I)$ that is invariant under all automorphisms $\varphi_i,$ $i \in I,$ is spanned by all ordered monomials lying in $V.$
\end{enumerate}	

The centralizer of $X_{\mathbb{Q}}$ is invariant under all automorphisms $\varphi_i,$ $i \in \mathbb{R},$ hence it is spanned by all ordered monomials. Let a monomial $$v=x_{i_1}^{k_1} \cdots x_{i_r}^{k_r}, \ i_1 < \ldots < i_r, \ 1\leq k_j \leq l-1, \ 1 \leq j \leq r,$$  lies in the centralizer of $X_{\mathbb{Q}}.$

Let $j$ be a rational number such that $j< i_1.$  Then $$x_j^{-1} v x_j = \xi^{(k_1+ \cdots + k_r)} v=v,$$ which implies $k_1+ \cdots + k_r=0 \mod l. $ Now let $j$ be a rational number such that $i_1 < j < i_2.$ Then $$x_j^{-1} v x_j = \xi^{(-k_1+ k_2+ \cdots + k_r)} v=v,$$ which implies  $-k_1+ k_2+ \cdots + k_r=0 \mod l. $

Subtracting these comparisons we get $2 k_1 =0 \mod l.$  Since the number $l$ is odd we conclude that $k_1 =0  \mod l ,$ a contradiction. This completes the proof of the Lemma \ref{lemma1}.
\end{proof}

The next statement directly follows from A.~G.~Kurosh \cite{Kurosh}.

\begin{lemma}
	\label{lemma2}
 Let $A$ be a locally matrix algebra of uncountable dimension. Suppose that $A$ contains a countable subset $S$ whose centralizer is $F \cdot 1$. Then $A$ is not isomorphic to a tensor product of finite dimensional matrix algebras.
\end{lemma}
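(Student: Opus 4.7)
The plan is to argue by contradiction. Suppose $A$ were isomorphic to a tensor product $\bigotimes_{i \in I} M_{n_i}(F)$ of finite dimensional matrix algebras. After discarding indices with $n_i = 1$, which contribute nothing, I may assume every $n_i \geq 2$. Since each finite tensor product $\bigotimes_{i \in J} M_{n_i}(F)$ over a finite $J \subset I$ has only finite dimension, an uncountable $\dim_F A$ forces the index set $I$ to be uncountable.

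By the very construction of the (infinite) tensor product as a directed colimit of the finite sub-tensor-products, every element of $A$ is supported on a finite subset of $I$: for each $a \in A$ there is a finite $J_a \subset I$ with $a \in A_{J_a} := \bigotimes_{i \in J_a} M_{n_i}(F)$. Applying this to every element of the countable set $S$ and taking the union, I obtain a \emph{countable} subset $J = \bigcup_{s \in S} J_s \subset I$ such that $S \subseteq A_J$.

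Because $I$ is uncountable while $J$ is countable, I may pick an index $k \in I \setminus J$. In the tensor product structure the factor $M_{n_k}(F)$, embedded at coordinate $k$, commutes element-wise with every factor at a different coordinate, hence with all of $A_J$, and in particular with $S$. Thus $M_{n_k}(F)$ lies in the centralizer of $S$. By hypothesis this centralizer equals $F \cdot 1_A$, forcing $n_k = 1$, which contradicts $n_k \geq 2$. This contradiction completes the proof.

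The only point requiring any care is the statement that each element of an infinite tensor product is supported on finitely many factors, and that factors at distinct coordinates commute as subalgebras of $A$; both are immediate from the directed-colimit definition of the tensor product, which is exactly why the author attributes the lemma directly to Kurosh.
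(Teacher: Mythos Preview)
Your proof is correct and follows essentially the same route as the paper's: both argue by contradiction that an uncountable-dimensional tensor product must have uncountably many nontrivial factors, observe that the countable set $S$ is supported on a countable subset $J$ of the index set, and then note that any factor indexed outside $J$ lies in the centralizer of $S$, contradicting the hypothesis. Your extra step of discarding the trivial factors with $n_i=1$ simply makes explicit a point the paper leaves implicit.
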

\begin{proof}
Let $A = \otimes_{i\in I} A_i,$ where $\dim_F A_i < \infty.$ Then $\text{Card } I > \aleph_0.$ The subset $S$ lies in $ \otimes_{j\in J} A_j,$ where $J$ is a countable subset of $I.$ Hence for any   $i \in I\backslash J$ the subalgebra  $ A_{i} $ lies in the centralizer of $S,$ a contradiction. This completes the proof of the Lemma \ref{lemma2}.
\end{proof}

\begin{proof}[Proof of Theorem \ref{theor3}]
From Lemma \ref{lemma1} it follows that  the centralizer of the countable subset $X_{\mathbb{Q}}$ in  $Clg(l,\mathbb{R})$ is $F \cdot 1$. Then  by Lemma \ref{lemma2} the algebra $Clg(l,\mathbb{R})$ does not admit a decomposition into a tensor product of  finite dimensional matrix algebras.  	
\end{proof}

\begin{proof}[Proof of Theorem \ref{theor4}]	\label{p1} (1) There are two examples of an  uncountable dimensional unital locally matrix algebra $A(2)$ with Steinitz number $s=2^\infty$ that is not isomorphic to a tensor product of finite dimensional matrix algebras. The first  example  was constructed by A.G. Kurosh \cite{Kurosh}. In \cite{BezOl} we found such an example among Clifford algebras. For more details see \cite{BezOl}, \cite{Kurosh}.

(2)	\label{p2} For   an odd number $l$ the  generalized Clifford algebra $A(l)=Clg(l,\mathbb{R})$ with   Steinitz  number $s=l^\infty$ also does not admit such a decomposition (Theorem \ref{theor3}).

(3) 	\label{p3}	Now let  $s$ be an infinite locally finite Steinitz number and let $A$ be the algebra from Theorem \ref{teor1} and Theorem \ref{teor2}, $\mathbf{st}(A)=s.$ If $A$ admitted a decomposition into a tensor product of finite dimensional matrix algebras then  $A$ would admit a primary decomposition.

(4) \label{p4}	Let $s$ be an infinite Steinitz number  that is not locally finite, i.e. $s=p^{\infty} \cdot s'$ for some  prime number $p$.  By  \cite{Sushch2} there exists a countable (or finite) dimensional unital locally matrix algebra $A'$  such that $\mathbf{st}(A')=s'.$ Let $A= A(p) \otimes_{F}  A'.$  Clearly, $\mathbf{st}(A)=s.$ By Lemma \ref{lemma1} and \cite{BezOl}, \cite{Kurosh} (in the case $p=2$) the algebra $ A(p)$ contains a countable dimensional subspace $W$ whose centralizer is $F \cdot 1_{A(p)}.$ The subspace $W\otimes_{F} A'$ of the algebra $A$ is also countable dimensional.

We claim that the centralizer of  $W\otimes_{F} A'$  is  $F \cdot 1_{A}$. Indeed, let an element $a=\sum_i a_i\otimes a_i'$ lie in the centralizer of $W\otimes_{F} A';$ $a_i \in A(p),$  $a_i' \in A'.$ Suppose that the elements $a_i' $ are linearly independent. For an arbitrary element $w\in W$ we have $$[\sum_i a_i\otimes a_i',w\otimes 1_{A'}]=\sum_i [a_i,w]\otimes a_i'=0,$$ which implies that the  elements $a_i$ lie in the centralizer of $W,$ that is, in  $F \cdot 1_{A(p)}.$ Hence $a= 1_{A(p)} \otimes a',$ where the element $a'$ lies in the center of $A',$ $a' \in F \cdot 1_{A'}.$ This completes the proof of the claim.

By Lemma \ref{lemma2} the algebra $A$ is not isomorphic to a tensor product of finite dimensional matrix algebras. This   completes the proof of the Theorem~\ref{theor4}.	
\end{proof}

\end{document}